\documentclass[11pt]{article}
\usepackage[a4paper,margin=1in]{geometry}
\usepackage{amsmath,amssymb,amsthm,mathtools}
\usepackage{enumitem}
\usepackage{microtype}
\usepackage{indentfirst}
\usepackage[numbers,sort&compress]{natbib}
\usepackage[hidelinks]{hyperref}
\hypersetup{
  pdftitle={Spectral Erdos-Gallai Theorems for the A-alpha-Tensor of the s-Clique Hypergraph},
  pdfauthor={Xiaoqi Liu and Haiying Shan},
  pdfsubject={Extremal alpha-s-clique spectral radius under a matching constraint},
  pdfkeywords={clique tensor, A-alpha tensor, alpha-s-clique spectral radius, matching, generalized Turan number}
}
\newcommand{\doi}[1]{\url{https://doi.org/#1}}
\newtheorem{theorem}{Theorem}[section]
\newtheorem{lemma}[theorem]{Lemma}
\newtheorem{proposition}[theorem]{Proposition}
\theoremstyle{definition}
\newtheorem{definition}[theorem]{Definition}
\newtheorem{problem}[theorem]{Open Problem}
\theoremstyle{remark}
\newtheorem{remark}[theorem]{Remark}

\newcommand{\Cs}{\mathcal C_s}          
\newcommand{\As}{\mathcal A_s}          
\newcommand{\Ds}{\mathcal D_s}          
\newcommand{\Aas}{\mathcal A_{\alpha,s}}
\newcommand{\rhoas}{\rho_{\alpha,s}}    
\newcommand{\Fstar}{F^{t,t-1}(n)}       
\newcommand{\Kstar}{K_{2t-1}\cup\overline K_{n-2t+1}} 

\title{Spectral Erd\H{o}s--Gallai Theorems
for the \(\mathcal A_\alpha\)-Tensor of the \(s\)-Clique Hypergraph}
\author{Xiaoqi Liu, \, Haiying Shan\thanks{Corresponding author. E-mail:
\texttt{2111152@tongji.edu.cn} (X.~Liu),
\texttt{shan\_haiying@tongji.edu.cn} (H.~Shan). This work is partially
supported by the National Natural Science Foundation of China
(No.~12271182).}
\\ \small School of Mathematical Sciences, Key Laboratory of Intelligent
Computing and Applications\\ \small (Ministry of Education), Tongji
University, Shanghai, China}
\date{}

\begin{document}
\markboth{Erd\H{o}s--Gallai for the $\mathcal A_\alpha$-tensor of the $s$-clique hypergraph}
         {Erd\H{o}s--Gallai for the $\mathcal A_\alpha$-tensor of the $s$-clique hypergraph}
\maketitle

\begin{abstract}
The Erd\H{o}s--Gallai theorem determines the maximum number of edges in a graph with bounded matching number; its clique-counting extension replaces edges by $s$-cliques, and a spectral analogue in terms of the $s$-clique tensor has recently been established. We study the corresponding $\mathcal A_\alpha$-tensor of the $s$-uniform clique hypergraph. For $0\leq\alpha\leq1$ and $3\le s\le2t-1$, we determine the maximum $\alpha$-$s$-clique spectral radius among $n$-vertex graphs containing no matching of $t$ edges: when $3\le s\le t$ and $n$ is sufficiently large, the maximum is attained by the join of a clique of order $t-1$ and an independent set, and when $t<s\le 2t-1$ and $n\ge2t-1$, it is attained by a clique of order $2t-1$ together with isolated vertices. At $\alpha=0$, these statements recover the known result for the $s$-clique spectral radius; at $\alpha=1$, they yield the corresponding statement for the maximum $s$-clique degree. For $s=t\ge3$, we obtain the maximum for every $n\ge2t-1$; at $\alpha=0$, this removes the requirement that $n$ be sufficiently large from the known result.
\end{abstract}
\medskip
\noindent\textbf{Keywords:}
clique tensor; $\mathcal A_\alpha$-tensor;
$\alpha$-$s$-clique spectral radius; matching;
generalized Tur\'an number.

\medskip
\noindent\textbf{2020 Mathematics Subject Classification:}
05C35; 05C50; 15A69.

\section{Introduction}
For a graph $F$, let $\mathrm{ex}(n,F)$ be the maximum number of edges in an
$n$-vertex graph containing no subgraph isomorphic to $F$; such a graph is
called $F$-free. Tur\'an~\cite{Turan} determined $\mathrm{ex}(n,K_{r+1})$,
the Tur\'an number, whose extremal graphs are the Tur\'an graphs. The
generalized Tur\'an number $\mathrm{ex}(n,H,F)$ counts copies of a fixed graph
$H$ instead of edges.

Zykov~\cite{Zykov} and Erd\H{o}s~\cite{ErdosCliques} determined
$\mathrm{ex}(n,K_s,K_{r+1})$ for $2\leq s\leq r$; Alon and
Shikhelman~\cite{AlonShikhelman} initiated the systematic study of
$\mathrm{ex}(n,H,F)$ for general pairs $H,F$. Let $M_t$ denote a matching of
$t$ pairwise disjoint edges. Erd\H{o}s and Gallai~\cite{ErdosGallai}
determined $\mathrm{ex}(n,M_t)$, and Wang~\cite{Wang} determined the
corresponding generalized Tur\'an number $\mathrm{ex}(n,K_s,M_t)$; we refer
to~\cite{Wang} also for the connection with the Erd\H{o}s matching conjecture.

Spectral extremal graph theory replaces the number of edges by the adjacency
spectral radius $\rho(G)$. Nosal~\cite{Nosal} proved the spectral Mantel
theorem, and Nikiforov~\cite{Nikiforov2007} extended it to
$K_{r+1}$-free graphs. For matchings, Feng, Yu, and
Zhang~\cite{FengYuZhang} determined the maximum adjacency spectral radius of
graphs with a given matching number. More recently, Yan, Yang, and
Peng~\cite{YanYangPeng} proved a spectral version of the generalized
Alon--Frankl theorem, in which cliques and matchings are restricted
simultaneously.

Nikiforov~\cite{Nikiforov} introduced the matrix
\[
 A_\alpha(G)=\alpha D(G)+(1-\alpha)A(G)
\]
for $0\leq\alpha\leq1$, which combines the adjacency and degree matrices.
Chen and Huang~\cite{ChenHuang} determined the maximum
$A_\alpha$-spectral radius among graphs with a prescribed matching number.
In particular, for $n\geq2t-1$ and $0\leq\alpha<1$, among $n$-vertex graphs
with matching number $t-1$, the extremal graph is
$K_{2t-1}\cup\overline K_{n-2t+1}$ when
$n<(3-\alpha)t-1$, and it is
$K_{t-1}\vee\overline K_{n-t+1}$ when
$n>(3-\alpha)t-1$; at equality, both graphs are extremal.
Using the equivalent normalization $\gamma D(G)+A(G)$, where
$\gamma=\alpha/(1-\alpha)$, Yuan and Shao~\cite{YuanShao} also treated the
equality case. In this matrix problem, the extremal graph changes at
$n=(3-\alpha)t-1$. In the tensor setting introduced below, an explicit admissible
value of $n_0=n_0(t,s,\alpha)$ is given in Remark~\ref{rem:n0} through the roots of auxiliary polynomials.
These results give the $A_\alpha$-matrix version of the Erd\H{o}s--Gallai
problem. This matrix problem is the case $s=2$ of our setting: since every edge
is a $2$-clique, the matrix $A_\alpha(G)$ coincides with the tensor
$\mathcal A_{\alpha,2}(G)$ defined in Section~\ref{sec:prelim}.

Tensor spectral methods extend spectral graph theory to uniform hypergraphs.
Qi~\cite{Qi2005} and Lim~\cite{Lim2005} introduced tensor eigenvalues.
Cooper and Dutle~\cite{CooperDutle} defined the adjacency tensor of a
uniform hypergraph. Liu and Bu~\cite{LiuBu} applied this construction to the
$s$-cliques of a graph to define its $s$-clique tensor. Yu and
Peng~\cite{YuPengZykov} proved the spectral Zykov theorem for the $s$-clique
tensor, the spectral analogue of $\mathrm{ex}(n,K_s,K_{r+1})$.

Since the spectral radius $\rho_s(G)$ of the $s$-clique tensor (defined in
Section~\ref{sec:prelim}) may be viewed as a spectral analogue of the number
of $s$-cliques in $G$, the theorem of Yu and Peng~\cite{YuPeng} is a spectral
counterpart of Wang's generalized Erd\H{o}s--Gallai theorem for
$\mathrm{ex}(n,K_s,M_t)$. To state their result, For \(n\ge 2t-1\) and \(0\le \ell\le t-1\), let \(F^{t,\ell}(n)\) be the graph on \([n]\) such that \([2t-1-\ell]\) induces a clique, \(\{2t-\ell,\ldots,n\}\) is an independent set, and each vertex in \(\{2t-\ell,\ldots,n\}\) is adjacent precisely to the vertices in \([\ell]\). The two cases $\ell=0$ and
$\ell=t-1$ of this family are the graphs that will occur below:
\[
 F^{t,0}(n)=K_{2t-1}\cup\overline K_{n-2t+1},
 \qquad
 F^{t,t-1}(n)=K_{t-1}\vee\overline K_{n-t+1}.
\]
Each of these graphs is
$M_t$-free: if a matching of $F^{t,\ell}(n)$ uses $k$ edges joining
$\{1,\ldots,\ell\}$ to $\{2t-\ell,\ldots,n\}$, then $k\leq\ell$ and those
edges occupy $k$ of the $2t-1-\ell$ clique vertices, while its remaining
edges lie inside the other $2t-1-\ell-k$ of them, so its size is at most
\[
 k+\left\lfloor\frac{2t-1-\ell-k}{2}\right\rfloor
 =\left\lfloor\frac{2t-1-\ell+k}{2}\right\rfloor
 \leq\left\lfloor\frac{2t-1}{2}\right\rfloor=t-1.
\]
Their theorem states that, for fixed $t\geq2$ and $s\geq2$, every $n$-vertex
$M_t$-free graph $G$ with sufficiently large $n$ satisfies
\[
 \rho_s(G)\leq
 \max\left\{\rho_s\bigl(K_{2t-1}\cup\overline K_{n-2t+1}\bigr),
                  \rho_s\bigl(F^{t,t-1}(n)\bigr)\right\}.
\]

For uniform hypergraphs, Lin, Guo, and Zhou~\cite{LinGuoZhou} introduced
the convex combination $\alpha \mathcal{D}+(1-\alpha)\mathcal{A}$ of the degree and adjacency
tensors, and Guo and
Zhou~\cite{GuoZhou} developed the corresponding $\alpha$-spectral theory.
Motivated by the results above on the $A_\alpha$-matrix and the $s$-clique
tensor, we
consider the $\mathcal A_\alpha$-tensor of the $s$-uniform clique hypergraph of
a graph, which combines the $s$-clique tensor with its degree tensor and is
the tensor analogue of the matrix $A_\alpha(G)$.

A direct estimate is obtained from the variational characterization of the
tensor spectral radius (see~\eqref{eq:variational} in
Section~\ref{sec:prelim}) by maximizing the two terms of the variational
formula separately:
\[
 \rhoas(G)\leq\alpha\,\Delta_s(G)+(1-\alpha)\,\rho_s(G),
\]
where $\Delta_s(G)$ is the maximum $s$-clique degree and $\rho_s(G)$ is the
$s$-clique tensor spectral radius. The
bound is sharp when $t<s\leq2t-1$, but generally not when
$3\leq s\leq t$: on $F^{t,t-1}(n)$ the vectors attaining the two separate
maxima need not coincide. The range $3\leq s\leq t$ therefore requires a joint
analysis of the variational formula.

If $s\geq2t$, then the matching number of $K_s$ is
$\lfloor s/2\rfloor\geq t$, so $K_s$ contains $M_t$. Hence every
$M_t$-free graph is $K_s$-free. Consequently, with the notation of
Section~\ref{sec:prelim}, we have $\Cs(G)=\varnothing$,
$\Aas(G)=0$, and $\rhoas(G)=0$ for every $0\leq\alpha\leq1$. This range is
degenerate, and the main theorem below addresses the range
$3\leq s\leq2t-1$.

\begin{theorem}\label{thm:matching}
Let $t\geq2$, $3\leq s\leq2t-1$, $0\leq\alpha\leq1$, and let $G$ be an $n$-vertex
$M_t$-free graph.
\begin{enumerate}[label=\textup{(\roman*)}]
\item If $3\leq s\leq t$, then there exists
$n_0=n_0(t,s,\alpha)$ such that, whenever $n\geq n_0$,
\[
 \rhoas(G)\leq\rhoas(\Fstar).
\]
\item If $t<s\leq2t-1$ and $n\geq2t-1$, then
\[
 \rhoas(G)\leq\rhoas(\Kstar)=\binom{2t-2}{s-1}.
\]
\end{enumerate}
\end{theorem}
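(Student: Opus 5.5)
We start from the variational characterization \eqref{eq:variational}: for a nonnegative unit vector $x$ in the $\ell^s$-norm,
\[
 \rhoas(G)=\max_{\|x\|_s=1,\,x\ge0}\;\alpha\sum_{v}d_s(v)x_v^s+(1-\alpha)\,s\!\!\sum_{K\in\Cs(G)}\prod_{v\in K}x_v ,
\]
where $d_s(v)$ is the number of $s$-cliques containing $v$. This gives the estimate $\rhoas(G)\le\alpha\Delta_s(G)+(1-\alpha)\rho_s(G)$ recorded in the introduction.

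\textbf{Part (ii).} Let $t<s\le2t-1$. The key combinatorial step is to show that every vertex of an $M_t$-free graph lies in at most $\binom{2t-2}{s-1}$ copies of $K_s$. Fix $v$ with $d_s(v)>0$ and consider the Gallai--Edmonds (or Tutte--Berge) structure of $G$. Every $s$-clique has $s>t$ vertices. A Tutte--Berge set $S$ with $|S|=k$ leaves odd components that can each absorb at most one matched vertex, so an $s$-clique must lie inside $S$ together with a single component $C$. The inequality $k+\sum_C\lfloor |C|/2\rfloor\le t-1$ then confines every $s$-clique through $v$ to $S\cup C$ with $|S\cup C|\le 2t-1$. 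Cliques through $v$ in a vertex set of size at most $2t-1$ number at most $\binom{2t-2}{s-1}$.

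An alternative is to use Wang's theorem for $\mathrm{ex}(n,K_s,M_t)$ applied to the link graph of $v$. The link $N(v)$ is $M_{t-1}$-free after a parity argument, and we need $\#K_{s-1}$ in it bounded by $\binom{2t-2}{s-1}$. Since $s-1\ge t$ exceeds the threshold, Wang's theorem says the clique $K_{2t-2}$ is optimal.

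In the same way, $\rho_s(G)\le\binom{2t-2}{s-1}$ follows because $\rho_s\le\Delta_s$, which is the standard row-sum bound for nonnegative tensors. Combining the two bounds gives $\rhoas(G)\le\binom{2t-2}{s-1}$. Equality holds for $\Kstar$: there the all-equal vector on $K_{2t-1}$ is an eigenvector, and the value is $\binom{2t-2}{s-1}$.

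\textbf{Part (i).} Let $3\le s\le t$. Take $x$ to be the Perron vector of $\Aas(G)$ and normalize $\max x_v=1$. We use the stability version of the Erd\H{o}s--Gallai theorem: by Gallai--Edmonds, $G$ is a subgraph of some $F^{t,\ell}(n)$-type graph. Concretely, there is a set $S$ with $|S|=\ell\le t-1$ such that $G-S$ has components whose total "excess" is bounded. For $n$ large, all but $O(1)$ vertices are isolated or lie in small components of $G-S$.

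We first compare with the target graph. A direct computation with $x$ constant on the $K_{t-1}$ part and constant on the independent part gives $\rhoas(\Fstar)=\Theta(n)$, and more precisely $\rhoas(\Fstar)\ge \alpha\binom{n-t+1}{1}\binom{t-2}{s-2}+c\,n^{1/s}$-type terms. It follows that $\rhoas(G)$ must be linear in $n$. Next we use the eigen-equations
\[
\rhoas x_v^{s-1}=\alpha d_s(v)x_v^{s-1}+(1-\alpha)\sum_{K\ni v}\prod_{u\in K\setminus v}x_u .
\]
These show that vertices outside $S$ have $d_s(v)=O(1)$ and hence small entries. This in turn forces $|S|=t-1$ for large $n$, since otherwise $\Delta_s(G)=O(n)$ with a strictly smaller leading constant $\binom{|S|-1}{s-2}$ (and $s\ge3$ makes the constant matter). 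With $|S|=t-1$, the matching bound forces $G-S$ to be edgeless, so $G\subseteq\Fstar$, and monotonicity of $\rhoas$ under adding edges finishes the argument.

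\textbf{Main obstacle.} The hardest step is the comparison in part (i) when $|S|<t-1$ but $G-S$ contains a moderately large clique, as in $F^{t,\ell}(n)$ with $\ell<t-1$. Here we must show $\rhoas(F^{t,\ell}(n))<\rhoas(\Fstar)$ for large $n$. We plan to reduce to the family $F^{t,\ell}(n)$ via Kelmans-type shifting, which does not decrease $\rhoas$. We then bound $\rhoas(F^{t,\ell}(n))$ by solving its quotient eigen-equations: there are three vertex classes, giving polynomial equations. Comparing leading asymptotics in $n$ then finishes the proof: the top degree is $\alpha\binom{\ell-1}{s-2}n$ versus $\alpha\binom{t-2}{s-2}n$ at $\alpha>0$, and the order $n^{(s-1)/s}$ terms handle $\alpha=0$. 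An explicit $n_0$ comes from the roots of these auxiliary polynomials.
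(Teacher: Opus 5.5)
Your proof of part~(i) has a genuine gap at $\alpha=0$, a case the theorem includes. Your argument for (i) runs entirely on the degree term: you assert $\rhoas(\Fstar)=\Theta(n)$, deduce that an extremal $G$ has linear spectral radius, and conclude by comparing the slopes $\alpha\binom{\ell-1}{s-2}<\alpha\binom{t-2}{s-2}$. At $\alpha=0$ there is no degree term and $\rho_s(\Fstar)=\Theta(n^{1-1/s})$. Both linearity claims are then false, and the slope comparison says nothing. Your fallback, that ``the order $n^{(s-1)/s}$ terms handle $\alpha=0$'', is exactly what needs proof, and the decisive inequality is never stated. Concretely, three facts are needed:
\begin{enumerate}[label=\textup{(\alph*)}]
\item Let $s-1\le\ell\le t-2$ and $D_\ell=s\binom{\ell}{s-1}\ell^{-(s-1)/s}\bigl(\tfrac{s-1}{s}\bigr)^{(s-1)/s}s^{-1/s}$. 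The cliques $A\cup\{w\}$ with $A\in\binom{S}{s-1}$ and $w$ isolated in $G-S$ contribute at most $D_\ell n^{1-1/s}$, by Maclaurin, H\"older and weighted AM--GM. All other cliques contribute $O(1)$, and everything is $O(1)$ when $\ell\le s-2$.
\item The vector constant on $[t-1]$ and on the independent part gives $\rho_s(\Fstar)\ge D_{t-1}(n-t+1)^{1-1/s}$.
\item $D_\ell<D_{t-1}$. This holds because $D_{\ell+1}/D_\ell=(\ell+1)^{1/s}\ell^{(s-1)/s}/(\ell+2-s)>1$.
\end{enumerate}
The paper never makes this comparison. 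At $\alpha=0$ it invokes Yu--Peng's theorem and only checks that $\rho_s(\Fstar)\ge(n-t+1)^{1-1/s}$ eventually exceeds $\binom{2t-2}{s-1}$. You must either cite that result or supply the coefficient comparison above.

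For $\alpha>0$ your route differs from the paper's and works once tidied. The paper compresses along a decreasingly ordered Perron vector (Lemma~\ref{lem:compression}) and uses Lemmas~\ref{lem:matching-free} and~\ref{lem:shifted-matching} to land inside some $F^{t,\ell}(n)$. Compression is really needed for that containment: Gallai--Edmonds alone does not place $G$ inside an actual $F^{t,\ell}(n)$ (take two disjoint triangles with $t=3$). However, the weaker structure you state ``concretely'' suffices. For a Tutte--Berge set $S$ with $|S|=\ell$ one has $\ell+\sum_C\lfloor|C|/2\rfloor\le t-1$. Hence vertices outside $S$ have $d_s=O(1)$, vertices of $S$ have $d_s\le\binom{\ell-1}{s-2}n+O(1)$, and the clique term is $O(n^{1-1/s})$. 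By~\eqref{eq:convexity}, $\ell\le t-2$ then loses to $\alpha\Delta_s(\Fstar)$ for large $n$. If $\ell=t-1$, then $G-S$ is edgeless and $G\subseteq\Fstar$. So the ``main obstacle'' you name (a sizable clique in $G-S$) is not one, since such components have bounded size. The Kelmans shifting you fall back on is therefore unnecessary; if you use it, you owe proofs that it preserves $M_t$-freeness and does not decrease $\rhoas$.

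For (ii), bounding $\Delta_s(G)\le\binom{2t-2}{s-1}$ directly is a valid alternative to the paper's observation that no $s$-clique of $F^{t,\ell}(n)$ meets $T$ when $\ell\le t-1\le s-2$. It needs one more line: for $v\in S$, cliques through $v$ might a priori lie in different components. Since $s>t$, every $s$-clique meets its component in at least $t+1-\ell$ vertices, so two such components would give $\ell+\sum_C\lfloor|C|/2\rfloor\ge t$, a contradiction. Hence all $s$-cliques lie in a single $S\cup C$ with $|S\cup C|\le2t-1-\ell$. Your alternative via the link is wrong as stated: the link of a vertex of $K_{2t-1}$ is $K_{2t-2}$, which contains $M_{t-1}$.
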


A concrete admissible value of $n_0$ for $0<\alpha<1$ is given in
Remark~\ref{rem:n0}. At $\alpha=0$, $n_0$ may be chosen large enough for the
known theorem for the $s$-clique spectral radius and the comparison in the
proof, while at
$\alpha=1$ an admissible $n_0$ follows by comparing the finitely many linear
functions in Proposition~\ref{prop:alpha-one}. Since every $F^{t,\ell}(n)$
is $M_t$-free, the bounds in \textup{(i)} and \textup{(ii)} are attained, by
$\Fstar=K_{t-1}\vee\overline K_{n-t+1}$ and by
$\Kstar$ respectively.
At $\alpha=0$, the problem reduces to the $s$-clique tensor studied by Yu and
Peng; at $\alpha=1$, the tensor is diagonal and its spectral radius equals the
maximum $s$-clique degree $\Delta_s(G)$. Theorem~\ref{thm:matching} includes
both $\alpha=0$ and $\alpha=1$. The following propositions give stronger conclusions. For
$s=t\geq3$,
Proposition~\ref{prop:maxform} shows that the
maximum is $\max\{\rhoas(\Kstar),\rhoas(\Fstar)\}$ for every
$n\geq2t-1$ and every $0\leq\alpha\leq1$. At $\alpha=0$, this removes the
hypothesis that $n$ be sufficiently large
from~\cite[Theorem~1.8]{YuPeng} when $s=t\geq3$
(Remark~\ref{rem:yupeng-improve}). Proposition~\ref{prop:alpha-one} determines
the maximum when $\alpha=1$ for all $s\geq3$ as an explicit maximum of finitely
many linear functions of $n$.

The proof consists of a structural reduction followed by a spectral
comparison. First, we order the vertices by the coordinates of a Perron
vector (a nonnegative eigenvector associated with the spectral radius) of
$\Aas(G)$ and apply graph
compressions. The compressions preserve $M_t$-freeness and do not decrease the
spectral radius. A result of Jiang, Peng, and
Wu~\cite{JiangPengWu} shows that the resulting left-compressed graph is a
subgraph of $F^{t,\ell}(n)$ for some $0\leq\ell\leq t-1$. Second, we describe
the $s$-cliques of $F^{t,\ell}(n)$ and compare the degree and clique terms
in the variational formula. Section~\ref{sec:prelim} gives the definitions
and preliminary facts, and Section~\ref{sec:main} contains the proof of the
main theorem.

\section{Definitions and preliminary facts}\label{sec:prelim}

Throughout this paper, all graphs are finite, simple, and undirected. For a
graph $G$ we write $V(G)$ and $E(G)$ for its vertex and edge sets, and an
\emph{$n$-vertex graph} is one with $n=|V(G)|$; unless stated otherwise the
vertex set is $[n]=\{1,\ldots,n\}$. We use the convention $\binom ab=0$ for
nonnegative integers $a,b$ with $b>a$. For
$s\geq2$, let $\Cs(G)$ be the set
of $s$-cliques of $G$. The $s$-clique degree of a vertex
$v$ is
\[
 d_s^G(v):=|\{C\in\Cs(G):v\in C\}|.
\]
The maximum $s$-clique degree is denoted by
\[
 \Delta_s(G):=\max_{v\in V(G)}d_s^G(v).
\]
For an integer $t\geq1$, $M_t$ denotes a matching of $t$ pairwise disjoint
edges; a graph is $M_t$-free if it contains no $M_t$.
We write $G\vee H$ for the join of $G$ and $H$, and $\overline K_m$ for
the edgeless graph on $m$ vertices.

For an order-$s$, dimension-$n$ complex tensor
\(\mathcal T=(t_{i_1\cdots i_s})\) and \(x\in\mathbb C^n\), write
\[
 \bigl(\mathcal T x^{s-1}\bigr)_i
 =\sum_{i_2,\ldots,i_s=1}^n
 t_{i i_2\cdots i_s}x_{i_2}\cdots x_{i_s}.
\]
Following Qi~\cite{Qi2005} and Lim~\cite{Lim2005}, a number
\(\lambda\in\mathbb C\) is an eigenvalue of
\(\mathcal T\) if there is a nonzero vector \(x\in\mathbb C^n\) such that
\[
 \mathcal T x^{s-1}=\lambda x^{[s-1]},
 \qquad
 x^{[s-1]}=(x_1^{s-1},\ldots,x_n^{s-1})^{\mathsf T}.
\]
The spectral radius is
\[
 \rho(\mathcal T):=\max\{|\lambda|:\lambda\text{ is an eigenvalue of }
 \mathcal T\}.
\]
When \(\mathcal T\) is real, an eigenvalue with a real eigenvector is
called an \(H\)-eigenvalue. For a symmetric nonnegative tensor,
\(\rho(\mathcal T)\) is its largest \(H\)-eigenvalue and admits a
nonnegative eigenvector~\cite{ChangPearsonZhang,Qi2013}.

\begin{definition}[Liu--Bu, Definition~1~\cite{LiuBu}]\label{def:clique-tensor}
The $s$-clique tensor of an $n$-vertex graph $G$ is the order-$s$,
dimension-$n$ symmetric tensor $\As(G)=(a_{i_1\cdots i_s})$ defined by
\[
 a_{i_1\cdots i_s}=
 \begin{cases}
 \dfrac1{(s-1)!},&\{i_1,\ldots,i_s\}\in\Cs(G),\\[2mm]
 0,&\text{otherwise}.
 \end{cases}
\]
\end{definition}

Write
\[
 \rho_s(G):=\rho(\As(G))
\]
for the spectral radius of the $s$-clique tensor.

The tensor $\As(G)$ is the Cooper--Dutle adjacency tensor of the $s$-uniform clique
hypergraph of $G$. Define its degree tensor $\Ds(G)$ by
\[
 (\Ds(G))_{i\cdots i}=d_s^G(i),
 \qquad (\Ds(G))_{i_1\cdots i_s}=0\ \text{otherwise}.
\]

\begin{definition}[$\alpha$-$s$-clique tensor]
For $0\leq\alpha\leq1$, define
\[
 \Aas(G):=\alpha\Ds(G)+(1-\alpha)\As(G),
 \qquad \rhoas(G):=\rho(\Aas(G)).
\]
For $0\leq\alpha<1$, $\Aas(G)$ is the $\mathcal A_\alpha$-tensor of the
$s$-uniform clique hypergraph of $G$ in the terminology of Guo and
Zhou~\cite{GuoZhou}; for completeness we also allow $\alpha=1$. When $\alpha=0$,
$\mathcal A_{0,s}(G)=\As(G)$ and $\rho_{0,s}(G)=\rho_s(G)$, whereas when
$\alpha=1$, $\mathcal A_{1,s}(G)=\Ds(G)$ is diagonal and
$\rho_{1,s}(G)=\Delta_s(G)$.
\end{definition}

Since $\Aas(G)$ is symmetric and nonnegative for every $0\leq\alpha\leq1$,
Qi's variational characterization~\cite[Theorem~2]{Qi2013} yields
\[
\rhoas(G)
=
\max_{\substack{x\in\mathbb{R}_+^n\\ \sum_{i=1}^n x_i^s=1}}
P_{\alpha,s}(G,x),
\]
where
\begin{equation}\label{eq:variational}
P_{\alpha,s}(G,x)
:=
\alpha\sum_{v\in V(G)}d_s^G(v)x_v^s
+
(1-\alpha)s
\sum_{C\in\Cs(G)}
\prod_{v\in C}x_v.
\end{equation}
The two terms in~\eqref{eq:variational} follow from
\[
 x^\top\Ds(G)x^{s-1}=\sum_{v\in V(G)}d_s^G(v)x_v^s
 \quad\text{and}\quad
 x^\top\As(G)x^{s-1}
 =s\sum_{C\in\Cs(G)}\prod_{v\in C}x_v.
\]
The factor $s$ in the second identity comes from the $s!$ orderings of each
clique and the normalization $1/(s-1)!$ in
Definition~\ref{def:clique-tensor}.

We shall use the following standard row-sum bound. Let $\mathcal T$ be a
nonnegative symmetric order-$s$ tensor with row sums $R_i(\mathcal T)$. Choose
a nonnegative eigenvector $x$ for $\rho(\mathcal T)$ and an index $i$ with
$x_i=\max_j x_j>0$. The $i$th eigen-equation gives
\[
 \rho(\mathcal T)x_i^{s-1}
 \leq R_i(\mathcal T)x_i^{s-1},
\]
and hence $\rho(\mathcal T)\leq\max_i R_i(\mathcal T)$. For both $\As(G)$
and $\Aas(G)$, the $v$th row sum is $d_s^G(v)$. Consequently,
\begin{equation}\label{eq:row-sum-bound}
 \rho_s(G)\leq\Delta_s(G),\qquad
 \rhoas(G)\leq\Delta_s(G).
\end{equation}

We record one further immediate consequence of the variational formula, used
repeatedly below. If $G$ and $G'$ are graphs on the same vertex set with
$G\subseteq G'$, then $\Cs(G)\subseteq\Cs(G')$ and $d_s^G(v)\leq d_s^{G'}(v)$
for every $v$, so $P_{\alpha,s}(G,x)\leq P_{\alpha,s}(G',x)$ for every
$x\geq0$; maximizing over $x$ gives
\begin{equation}\label{eq:monotone}
 G\subseteq G'\ \Longrightarrow\ \rhoas(G)\leq\rhoas(G').
\end{equation}

For comparison, as noted in the Introduction, maximizing the two terms of
$P_{\alpha,s}(G,x)$ separately
gives
\begin{equation}\label{eq:convexity}
 \rhoas(G)\leq\alpha\,\Delta_s(G)+(1-\alpha)\,\rho_s(G).
\end{equation}

\section{The generalized Erd\H{o}s--Gallai theorem}\label{sec:main}

This section proves Theorem~\ref{thm:matching} and the two further propositions
announced in the Introduction.

\subsection{Compression and a structural lemma}

For distinct vertices $i,j$, let $\pi_{ij}(G)$ be the standard compression of
$j$ to $i$: for every $u\notin\{i,j\}$ such that $ju\in E(G)$ and
$iu\notin E(G)$, replace $ju$ by $iu$, leaving all other edges unchanged.
We use the following two results of Jiang, Peng, and Wu~\cite{JiangPengWu},
stated here in graph notation.

A graph on $[n]$ is called \emph{left-compressed} if
$\pi_{ij}(G)=G$ for every $1\leq i<j\leq n$. Equivalently, represent each edge
by its ordered pair $(a,b)$ with $a<b$. If $(a,b)$ is an edge and another
ordered pair $(a',b')$ satisfies $a'<b'$, $a'\leq a$, and $b'\leq b$, then
$(a',b')$ is also an edge.

Recall that $M_t$ denotes a matching of $t$ pairwise disjoint edges and that
$F^{t,\ell}(n)$ was defined in the Introduction for $0\leq\ell\leq t-1$.

\begin{lemma}[Jiang--Peng--Wu, Lemma~2.5(a)~\cite{JiangPengWu}]
\label{lem:matching-free}
Let $t\geq2$ be an integer, let $G$ be an $M_t$-free graph on $[n]$, and
let $1\leq i<j\leq n$. Then $\pi_{ij}(G)$ is $M_t$-free.
\end{lemma}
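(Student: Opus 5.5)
We argue by contradiction via an augmenting-type exchange. Suppose $G' = \pi_{ij}(G)$ contains a matching $M'$ of $t$ edges, and let $S$ denote the set of edges $iu$ of $G'$ that are not edges of $G$; by definition of the compression, each such $iu$ arises from an edge $ju\in E(G)$ with $iu\notin E(G)$, $u\notin\{i,j\}$. Every edge of $G'$ not in $S$ is an edge of $G$. Since $M'$ is a matching, at most one edge of $M'$ is incident with $i$, so $|M'\cap S|\le 1$. If $M'\cap S=\varnothing$, then $M'\subseteq E(G)$ and $G$ contains $M_t$, a contradiction. So we may assume $M'\cap S=\{iu\}$ for a unique $u$, with $ju\in E(G)$ and $iu\notin E(G)$.

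We then split according to whether $j$ is covered by $M'$. If $j$ is not covered by $M'$, replace $iu$ by $ju$: the edges of $M'\setminus\{iu\}$ avoid $i$, $u$ and $j$, so $(M'\setminus\{iu\})\cup\{ju\}$ is a matching of $t$ edges of $G$, a contradiction. If $j$ is covered, say by $jw\in M'$ with $w\ne i,u$, we note that $jw$ is an edge of $G'$ incident with $j$. The key observation is that the compression only ever deletes edges at $j$ when it moves them to $i$; hence $jw\in E(G')$ means $jw\in E(G)$ and either $iw\in E(G)$ already, or else the compression would have replaced $jw$ by $iw$ and removed $jw$ from $G'$. Thus $iw\in E(G)$. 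Now replace the two edges $iu, jw$ of $M'$ by $ju, iw$, both edges of $G$, and keep the remaining $t-2$ edges, which lie in $E(G)$ and avoid $\{i,j,u,w\}$. This gives $M_t$ in $G$, again a contradiction.

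The only delicate step is the last case: one must check carefully that an edge $jw$ surviving in $\pi_{ij}(G)$ forces $iw\in E(G)$ (including the case $w=i$, which is excluded since $i$ is already covered by $iu$), and that the edges $ju$ and $iw$ are distinct and disjoint, which holds as $u\ne w$ and $\{i,j\}\cap\{u,w\}=\varnothing$. Everything else is bookkeeping of which edges belong to $G$ versus $G'$.
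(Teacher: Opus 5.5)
Your argument is correct. It is the standard exchange argument for shifting: a $t$-matching in $\pi_{ij}(G)$ uses at most one new edge $iu$. That edge can be swapped for $ju$ when $j$ is uncovered. Otherwise, if $jw$ is the matching edge covering $j$, the pair $iu,jw$ can be swapped for $ju,iw$, because a surviving edge $jw$ forces $iw\in E(G)$.

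The paper gives no proof of its own and simply cites Jiang--Peng--Wu, so there is nothing further to compare. Your argument also never uses the hypotheses $i<j$ or $t\ge2$.
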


\begin{lemma}[Jiang--Peng--Wu, Lemma~4.3~\cite{JiangPengWu}]
\label{lem:shifted-matching}
Let $n\geq2t$, and let $G$ be a left-compressed $M_t$-free graph on $[n]$.
Then there is an $\ell\in\{0,1,\ldots,t-1\}$ such that
\[
 G\subseteq F^{t,\ell}(n).
\]
\end{lemma}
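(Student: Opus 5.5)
The plan is to reduce the containment $G\subseteq F^{t,\ell}(n)$ to the absence of a single edge, using left-compression. Then we show that if all $t$ candidate edges were present, they would form a copy of $M_t$.

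\emph{Step 1 (reformulating $F^{t,\ell}(n)$).} Write each pair as $(a,b)$ with $a<b$. By the definition in the Introduction, $(a,b)$ is an edge of $F^{t,\ell}(n)$ exactly when $a\leq\ell$ or $b\leq 2t-1-\ell$. If $a\leq\ell$, then $a$ lies in the clique $[2t-1-\ell]$ and is also joined to every vertex of $\{2t-\ell,\dots,n\}$. If $a>\ell$, then $(a,b)$ is an edge only if both ends lie in the clique. Hence $G\subseteq F^{t,\ell}(n)$ is equivalent to the following: $G$ has no edge $(a,b)$ with $a\geq\ell+1$ and $b\geq2t-\ell$.

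\emph{Step 2 (one edge suffices).} Let $G$ be left-compressed, and suppose $G$ had an edge $(a,b)$ with $a\geq\ell+1$ and $b\geq 2t-\ell$. Note that $\ell+1<2t-\ell$ because $\ell\leq t-1$. Applying the equivalent form of left-compression with $a'=\ell+1\leq a$ and $b'=2t-\ell\leq b$ shows that $(\ell+1,2t-\ell)\in E(G)$. Conversely, if $(\ell+1,2t-\ell)\in E(G)$, then $G\not\subseteq F^{t,\ell}(n)$ by Step 1. Therefore, for each $\ell\in\{0,\dots,t-1\}$,
\[
 G\subseteq F^{t,\ell}(n)\iff (\ell+1,\,2t-\ell)\notin E(G).
\]

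\emph{Step 3 (contradiction via a matching).} Suppose no $\ell\in\{0,\dots,t-1\}$ works. By Step 2, all of the pairs
\[
 (1,2t),\ (2,2t-1),\ \dots,\ (t,t+1)
\]
are then edges of $G$. These pairs are well defined since $n\geq 2t$, and they are pairwise disjoint because they partition $[2t]$. So they form a copy of $M_t$ in $G$, which contradicts $M_t$-freeness. Hence some $\ell\leq t-1$ satisfies $(\ell+1,2t-\ell)\notin E(G)$, and for this $\ell$ we get $G\subseteq F^{t,\ell}(n)$.

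I expect the only real obstacle to be Step 1: identifying the correct ``minimal forbidden pair'' $(\ell+1,2t-\ell)$ that characterizes $F^{t,\ell}(n)$. A more naive choice of $\ell$ would lead into a messy induction. One natural but wrong choice is the largest neighbour index of vertex $n$, for which left-compression pushes edges in the wrong direction. Once the characterization is in place, the downward-closedness of left-compressed graphs and the explicit matching $\{(i,2t+1-i)\}_{i=1}^t$ finish the argument immediately.
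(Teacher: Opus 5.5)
Your proof is correct and follows essentially the same route as the paper: both pick an $\ell$ for which one of the $t$ disjoint pairs $\{\ell+1,2t-\ell\}$ is missing (guaranteed since $n\geq 2t$ and $G$ is $M_t$-free), and then use the equivalent form of left-compression to rule out any edge $(a,b)$ with $a\geq\ell+1$ and $b\geq 2t-\ell$, which is exactly the complement of $E(F^{t,\ell}(n))$. You phrase the choice of $\ell$ by contradiction and verify the edge characterization of $F^{t,\ell}(n)$ and the converse direction explicitly, which the paper leaves implicit, but the argument is the same.
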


\begin{proof}
The $t$ edges
$\{r,2t+1-r\}$, $1\leq r\leq t$, are pairwise disjoint, so at least one of
them is absent from $G$. Choose $\ell\in\{0,\ldots,t-1\}$ such that
$\{\ell+1,2t-\ell\}\notin E(G)$. If an edge $ab$, written with $a<b$, satisfied
$a\geq\ell+1$ and $b\geq2t-\ell$, left-compression would force
$\{\ell+1,2t-\ell\}\in E(G)$, a contradiction. Hence every edge either lies
inside $[2t-1-\ell]$ or has an endpoint in $[\ell]$. Therefore
$G\subseteq F^{t,\ell}(n)$.
\end{proof}

\begin{lemma}\label{lem:compression}
Let $s\geq2$, let $G$ be an $n$-vertex graph on $[n]$, let
$0\leq\alpha\leq1$, let $i,j\in[n]$ be distinct, and let
$x\in\mathbb R_+^n$ satisfy $x_i\geq x_j$. Then
\begin{equation}\label{eq:compression-poly}
 P_{\alpha,s}(\pi_{ij}(G),x)\geq P_{\alpha,s}(G,x).
\end{equation}
If, in addition, $\sum_vx_v^s=1$ and $P_{\alpha,s}(G,x)=\rhoas(G)$, then
\[
 \rhoas(\pi_{ij}(G))\geq\rhoas(G).
\]
\end{lemma}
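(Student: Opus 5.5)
The plan is to compare the two terms of $P_{\alpha,s}$ in~\eqref{eq:variational} separately. For the clique term we use a map from cliques of $G$ to cliques of $G'=\pi_{ij}(G)$. For the degree term we rewrite $\sum_v d_s(v)x_v^s$ as a sum over cliques, namely $\sum_{C\in\Cs}\sum_{v\in C}x_v^s$. Then both terms have the form $\sum_{C\in\Cs(G)}f(C)$ with
\[
 f(C)=\alpha\sum_{v\in C}x_v^s+(1-\alpha)s\prod_{v\in C}x_v .
\]
So it suffices to construct an injection $\phi:\Cs(G)\to\Cs(G')$ with $f(\phi(C))\ge f(C)$ for every $C$.

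To build $\phi$, classify each $C\in\Cs(G)$. If $C$ is still a clique of $G'$, set $\phi(C)=C$. Otherwise some edge of $C$ was moved. The only edges removed by $\pi_{ij}$ are edges $ju$ with $iu\notin E(G)$, so $j\in C$ and $i\notin C$, and some $u\in C\setminus\{j\}$ satisfies $iu\notin E(G)$. In this case set $\phi(C)=C-j+i$.

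We check that $\phi(C)$ is a clique of $G'$. For $u\in C\setminus\{j\}$, either $iu\in E(G)$, which is kept in $G'$, or $iu\notin E(G)$ and $ju\in E(G)$, so $iu$ is added in $G'$. Edges among $C\setminus\{j\}$ avoid $j$ and therefore survive.

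We check injectivity. Suppose $D=C-j+i$ for a moved clique $C$, and suppose $D$ were also a clique of $G$ that survives in $G'$. Then $iu\in E(G)$ for all $u\in C\setminus\{j\}$, which contradicts the choice of $C$. So images of moved cliques are not images of surviving cliques. Two moved cliques have distinct images because $C$ is recovered from $D$ as $D-i+j$.

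For the weights, when $\phi(C)=C-j+i$ we have
\[
 f(\phi(C))-f(C)=\alpha\,(x_i^s-x_j^s)+(1-\alpha)s\,(x_i-x_j)\prod_{v\in C\setminus\{j\}}x_v\ \ge 0,
\]
since $x_i\ge x_j\ge0$. Summing over $C$, and using that the remaining terms $f(D)\ge0$ for cliques of $G'$ outside the image, gives~\eqref{eq:compression-poly}.

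For the second statement, take $x$ normalized with $P_{\alpha,s}(G,x)=\rhoas(G)$. Then
\[
 \rhoas(\pi_{ij}(G))\ \ge\ P_{\alpha,s}(\pi_{ij}(G),x)\ \ge\ P_{\alpha,s}(G,x)=\rhoas(G),
\]
where the first inequality is the variational characterization applied to $\pi_{ij}(G)$.

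The main obstacle is the injectivity check, in particular ruling out that $C-j+i$ is already a surviving clique of $G$. The argument above handles this by choosing the witness $u$ with $iu\notin E(G)$.
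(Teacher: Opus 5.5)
Your proposal is correct and follows the paper's proof essentially step for step. The paper also writes $P_{\alpha,s}$ as a sum of clique weights $Q_C(x)$ (your $f(C)$), sends each destroyed clique $C\in\Cs(G)\setminus\Cs(\pi_{ij}(G))$ injectively to $(C\setminus\{j\})\cup\{i\}\in\Cs(\pi_{ij}(G))\setminus\Cs(G)$, and compares the weights termwise; your injection is this map extended by the identity on surviving cliques, and your remark that $i\notin C$ follows from the witness $u\in C$ with $iu\notin E(G)$.
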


\begin{proof}
Set $H=\pi_{ij}(G)$ and write the variational polynomial as
\[
 P_{\alpha,s}(G,x)=\sum_{C\in\Cs(G)}Q_C(x),\qquad
 Q_C(x):=\alpha\sum_{v\in C}x_v^s+(1-\alpha)s\prod_{v\in C}x_v.
\]
Let $\mathcal L=\Cs(G)\setminus\Cs(H)$ and
$\mathcal N=\Cs(H)\setminus\Cs(G)$. If $C\in\mathcal L$, then $C$ contains at
least one deleted edge $ju$ with $u\notin\{i,j\}$, $ju\in E(G)$, and
$iu\notin E(G)$; in particular $j,u\in C$. Since $C$ is a clique of $G$, the
presence of $i$ in $C$ would force $iu\in E(G)$, a contradiction; hence
$j\in C$ and $i\notin C$. Consider the replacement
$\phi(C)=(C\setminus\{j\})\cup\{i\}$. All edges with both endpoints in
$C\setminus\{j\}$ are unchanged by the compression. Moreover, for every
$v\in C\setminus\{j\}$ we have $jv\in E(G)$, and if $iv\notin E(G)$ then $iv$
is one of the edges created by the compression; in either case
$iv\in E(H)$. Hence $\phi(C)$ is an $s$-clique of $H$. The missing edge $iu$
shows that $\phi(C)$ is not a clique of $G$, so $\phi(C)\in\mathcal N$; and
$\phi$ is injective, since
$i\in\phi(C)$ and $C=(\phi(C)\setminus\{i\})\cup\{j\}$. By the definitions of
$\mathcal L$ and $\mathcal N$,
$\Cs(H)=(\Cs(G)\setminus\mathcal L)\,\dot\cup\,\mathcal N$, and every clique
in $\mathcal N\setminus\phi(\mathcal L)$ contributes a nonnegative term.

For $C'=\phi(C)$, the difference between the corresponding terms is
\begin{align*}
 Q_{C'}(x)-Q_C(x)
 &=\alpha\left(\sum_{v\in C'}x_v^s-\sum_{v\in C}x_v^s\right)
 +(1-\alpha)s\left(\prod_{v\in C'}x_v-\prod_{v\in C}x_v\right)\\
 &=\alpha(x_i^s-x_j^s)+(1-\alpha)s(x_i-x_j)
 \prod_{v\in C\setminus\{j\}}x_v\geq0.
\end{align*}
Summing these inequalities and adding the remaining terms
gives~\eqref{eq:compression-poly}. If, moreover, $x$ is an optimal solution of the maximization
problem in~\eqref{eq:variational} for $G$, then
\[
 \rhoas(H)\geq P_{\alpha,s}(H,x)
 \geq P_{\alpha,s}(G,x)=\rhoas(G),
\]
as required.
\end{proof}

\subsection{Proof of Theorem~\ref{thm:matching}}

\begin{proof}
We first prove part~\textup{(i)} at $\alpha=0$. By the theorem of Yu and
Peng~\cite[Theorem~1.8]{YuPeng}, for fixed $t$ and $s$ and all sufficiently
large $n$, every $n$-vertex $M_t$-free graph satisfies
\[
 \rho_s(G)\leq\max\{\rho_s(\Kstar),\rho_s(\Fstar)\}.
\]
It remains only to identify which term is larger for large $n$. Put
$m=n-t+1$, fix an $(s-1)$-subset $A$ of the clique part of
$\Fstar=K_{t-1}\vee\overline K_m$, and define a nonnegative vector by
\[
 x_u=s^{-1/s}\quad(u\in A),
 \qquad
 x_w=(sm)^{-1/s}\quad(w\in\overline K_m),
 \qquad x_v=0\quad\text{otherwise}.
\]
Then $\sum_vx_v^s=(s-1)/s+1/s=1$, and every
$A\cup\{w\}$ with $w\in\overline K_m$ is an $s$-clique. The variational
formula therefore gives
\[
 \rho_s(\Fstar)\geq
 s m(s^{-1/s})^{s-1}(sm)^{-1/s}=m^{1-1/s}.
\]
The right-hand side tends to infinity with $n$, whereas
$\rho_s(\Kstar)=\binom{2t-2}{s-1}$ is constant. Thus
$\rho_s(\Fstar)>\rho_s(\Kstar)$ for all sufficiently large $n$, proving
part~\textup{(i)} when $\alpha=0$.

If $t<s\leq2t-1$ and $n=2t-1$, every $n$-vertex graph is $M_t$-free, and
$G\subseteq K_{2t-1}$. By~\eqref{eq:monotone},
\[
 \rhoas(G)\leq\rhoas(K_{2t-1})=\binom{2t-2}{s-1}.
\]
Thus part~\textup{(ii)} holds when $n=2t-1$. For the remainder of the proof,
assume $n\geq2t$.
Choose an $n$-vertex $M_t$-free graph $G^*$ with maximum $\rhoas$, and let $x$
be a normalized nonnegative Perron vector of $\Aas(G^*)$. Relabel the vertices so
that $x_1\geq\cdots\geq x_n$, and set $G_0=G^*$. As long as the
current graph $G_k$ is not left-compressed, choose $i<j$ such that
$\pi_{ij}(G_k)\ne G_k$ and set $G_{k+1}=\pi_{ij}(G_k)$. Compression preserves
$M_t$-freeness by Lemma~\ref{lem:matching-free}. We now prove simultaneously
that every step preserves the extremal spectral radius and that the fixed
vector $x$ is still an optimal solution of the maximization problem in~\eqref{eq:variational}. We show by induction that
\[
 \rhoas(G_k)=\rhoas(G^*)
 \quad\text{and}\quad
 P_{\alpha,s}(G_k,x)=\rhoas(G_k).
\]
These equalities hold for $k=0$. By the choice of $G^*$, at the $k$th step we
have
\[
 \rhoas(G^*)\geq\rhoas(G_{k+1})
 \geq P_{\alpha,s}(G_{k+1},x)
 \geq P_{\alpha,s}(G_k,x)=\rhoas(G_k),
\]
where the last inequality is~\eqref{eq:compression-poly}, applicable because
$i<j$ and the coordinates of $x$ are nonincreasing.
The first and last terms are equal, so equality holds throughout. In
particular, $P_{\alpha,s}(G_{k+1},x)=\rhoas(G_{k+1})$. Thus the same vector $x$
and the same ordering of its coordinates can be used at every step. If a
compression moves $q_k\geq1$ edges, then the integer-valued potential
$\sum_{uv\in E(G_k)}(u+v)$ decreases by exactly $q_k(j-i)>0$. Hence after
finitely many steps we obtain a left-compressed extremal graph $H$.

By Lemma~\ref{lem:shifted-matching}, $H\subseteq F^{t,\ell}(n)$ for some
$0\leq\ell\leq t-1$. The variational formula gives
\[
 \rhoas(G^*)=\rhoas(H)\leq\rhoas(F^{t,\ell}(n)).
\]
It remains to compare the spectral radii of the graphs $F^{t,\ell}(n)$. Fix
$0\leq\ell\leq t-1$, and write
\[
 p_\ell=2t-1-\ell,\qquad N_\ell=n-p_\ell,\qquad
 S=[\ell],\quad B=[p_\ell],\quad T=\{p_\ell+1,\ldots,n\}.
\]
The set $B$ induces a clique, the set $T$ is independent, and every vertex of
$T$ is adjacent exactly to the vertices in $S$. Every $s$-clique meeting $T$
contains exactly one vertex of $T$ and $s-1$ vertices of $S$. We therefore
define
\[
 \mathcal K_\ell=\binom{B}{s},\qquad
 \mathcal T_\ell=
 \left\{A\cup\{w\}:A\in\binom{S}{s-1},\ w\in T\right\}.
\]
Thus $\mathcal K_\ell$ is the set of $s$-cliques contained
in $B$, and $\mathcal T_\ell$ is the set of $s$-cliques that meet $T$. Hence
\begin{equation}\label{eq:clique-decomposition}
\Cs(F^{t,\ell}(n))=\mathcal K_\ell\,\dot\cup\,\mathcal T_\ell.
\end{equation}

If $\ell\leq s-2$, then $|S|<s-1$ and $\mathcal T_\ell=\varnothing$. Hence
every $s$-clique is contained in $B$, and every vertex in $T$ has $s$-clique
degree zero, that is, the rows of $\Aas(F^{t,\ell}(n))$ indexed by $T$ are
zero rows although these vertices are not isolated in the graph. It follows
that the variational polynomial is independent of the coordinates $x_j$,
$j\in T$, and

\begin{equation}\label{eq:small-core-bound}
     \rhoas(F^{t,\ell}(n))=\rhoas(K_{p_\ell})
 \leq\rhoas(K_{2t-1}).
\end{equation}
For every positive integer $p$,
\(\rho_s(K_p)=\rhoas(K_p)=\binom{p-1}{s-1}\): the $s$-clique hypergraph is
regular with this degree, so both $\As(K_p)$ and $\Aas(K_p)$ have the same
constant row sum, and the claim follows from the all-ones vector
and~\eqref{eq:row-sum-bound}.
The same formula remains valid for $p<s$, since both sides are zero.

Assume $s-1\leq\ell\leq t-1$. The $s$-clique degrees of vertices in
$S$, $B\setminus S$, and $T$, respectively, are
\[
 N_\ell\binom{\ell-1}{s-2}+\binom{p_\ell-1}{s-1},
 \qquad \binom{p_\ell-1}{s-1},
 \qquad \binom\ell{s-1}.
\]
The degree of a vertex in $S$ is at least that of a vertex in $B\setminus S$.
To compare the degrees in $S$ and $T$, note that $n\geq2t$ gives
$N_\ell=n-2t+1+\ell\geq\ell+1$, while
\[
 \ell\binom{\ell-1}{s-2}=(s-1)\binom\ell{s-1}
 \geq\binom\ell{s-1}.
\]
Therefore
$N_\ell\binom{\ell-1}{s-2}\geq\binom\ell{s-1}$, so a vertex in $S$ also has
degree at least that of a vertex in $T$. Consequently,
\[
 \Delta_s(F^{t,\ell}(n))=na_\ell+c_\ell,
\]
where
\[
 a_\ell=\binom{\ell-1}{s-2},\qquad
 c_\ell=\binom{2t-2-\ell}{s-1}-(2t-1-\ell)a_\ell.
\]

Let $x\geq0$ satisfy $\sum_v x_v^s=1$, and write
\[
 X=\sum_{i\in S}x_i^s,\qquad Y=\sum_{j\in T}x_j^s,
 \qquad e_{s-1}(x_S)=\sum_{A\in\binom S{s-1}}\prod_{i\in A}x_i.
\]
By~\eqref{eq:clique-decomposition}, the sum over $\mathcal T_\ell$ factors as
$e_{s-1}(x_S)\sum_{j\in T}x_j$. Maclaurin's inequality for elementary
symmetric means, combined with the power-mean inequality, gives the first
estimate below; H\"older's inequality gives the second; and the weighted
AM--GM inequality, applied to $X/\lambda$ and $Y/\mu$ with weights
$\lambda=(s-1)/s$ and $\mu=1/s$, gives the
third, where $X+Y\leq1$ holds because $S\cap T=\varnothing$:
\begin{align*}
 e_{s-1}(x_S)&\leq
 \binom\ell{s-1}\ell^{-(s-1)/s}X^{(s-1)/s},\\
 \sum_{j\in T}x_j&\leq N_\ell^{1-1/s}Y^{1/s},\\
 X^{(s-1)/s}Y^{1/s}&\leq
 \left(\frac{s-1}{s}\right)^{(s-1)/s}s^{-1/s}.
\end{align*}
Hence
\[
 s\sum_{C\in\mathcal T_\ell}\prod_{v\in C}x_v
 \leq D_\ell N_\ell^{1-1/s},
 \qquad
 D_\ell=s\binom{\ell}{s-1}\ell^{-(s-1)/s}
 \left(\frac{s-1}{s}\right)^{(s-1)/s}s^{-1/s}.
\]
Put $W=\sum_{i\in B}x_i^s$. If $W>0$, apply the variational formula for the
$s$-clique tensor to the normalized vector $W^{-1/s}x_B$. Rescaling gives the
first inequality below. If $W=0$, that inequality is immediate. Thus
\[
 s\sum_{C\in\mathcal K_\ell}\prod_{v\in C}x_v
 \leq\rho_s(K_{p_\ell})\sum_{i\in B}x_i^s
 \leq\binom{p_\ell-1}{s-1}.
\]
Here we used the formula
$\rho_s(K_{p_\ell})=\binom{p_\ell-1}{s-1}$ established above and $W\leq1$.
The degree contribution is at most
\(\Delta_s(F^{t,\ell}(n))\sum_v x_v^s=na_\ell+c_\ell\).
The variational formula yields
\begin{equation}\label{eq:upperF}
 \rhoas(F^{t,\ell}(n))
 \leq\alpha(na_\ell+c_\ell)+(1-\alpha)
 \left\{\binom{p_\ell-1}{s-1}+D_\ell N_\ell^{1-1/s}\right\}.
\end{equation}
Choose a vertex $u\in[t-1]$ in the clique part of $\Fstar$ and substitute the
standard basis vector $e_u$ into the variational formula. The clique term vanishes,
and the degree term equals $\alpha d_s^{\Fstar}(u)$. Substituting $\ell=t-1$
into the degree formula gives
\begin{equation}\label{eq:lowerF}
 \rhoas(\Fstar)\geq\alpha(na_{t-1}+c_{t-1}).
\end{equation}

Suppose $3\leq s<t$ and $\alpha>0$. For $s-1\leq\ell\leq t-2$, let
\[
 A_\ell=\alpha(a_{t-1}-a_\ell),\qquad
 C_\ell=\alpha\max\{c_\ell-c_{t-1},0\}
 +(1-\alpha)\binom{p_\ell-1}{s-1}.
\]
Pascal's identity gives
\[
 a_{t-1}-a_\ell
 =\binom{t-2}{s-2}-\binom{\ell-1}{s-2}
 =\sum_{k=\ell}^{t-2}\binom{k-1}{s-3}>0,
\]
so $A_\ell>0$. From~\eqref{eq:upperF} and~\eqref{eq:lowerF}, using
$N_\ell\leq n$, we obtain
\begin{align*}
 &\rhoas(\Fstar)-\rhoas(F^{t,\ell}(n))\\
 &\quad\geq\alpha\{n(a_{t-1}-a_\ell)+c_{t-1}-c_\ell\}
 -(1-\alpha)\binom{p_\ell-1}{s-1}
 -(1-\alpha)D_\ell N_\ell^{1-1/s}\\
 &\quad\geq A_\ell n-C_\ell-(1-\alpha)D_\ell n^{1-1/s}.
\end{align*}
Because $1-1/s<1$ and $A_\ell>0$, the positive linear term in the last bound
dominates the remaining terms. Hence
$\rhoas(\Fstar)>\rhoas(F^{t,\ell}(n))$ for all sufficiently large $n$. When
$3\leq s<t$, inequality~\eqref{eq:small-core-bound} applies to
$0\leq\ell\leq s-2$, the preceding argument gives
$\rhoas(\Fstar)>\rhoas(F^{t,\ell}(n))$ for $s-1\leq\ell\leq t-2$, and the
remaining graph is $\Fstar$.

We now finish the proof of part~\textup{(i)} for $\alpha>0$. If
$3\leq s\leq t$, then
$a_{t-1}=\binom{t-2}{s-2}\geq1$. By~\eqref{eq:lowerF},
$\rhoas(\Fstar)$ grows at least linearly with $n$, whereas
\[
\rhoas(\Kstar)=\rhoas(K_{2t-1})=\binom{2t-2}{s-1}
\]
is independent of $n$. Hence
$\rhoas(\Fstar)>\rhoas(\Kstar)$ for all sufficiently
large $n$. For $3\leq s<t$, this inequality,~\eqref{eq:small-core-bound}, and
the comparisons for $s-1\leq\ell\leq t-2$ show that
$\rhoas(\Fstar)\geq\rhoas(F^{t,\ell}(n))$ for every $\ell$. For $s=t\geq3$, every
graph $F^{t,\ell}(n)$ with
$0\leq\ell\leq t-2$ satisfies~\eqref{eq:small-core-bound}, and the remaining
graph is $\Fstar$. In either case,
$\rhoas(G^*)\leq\rhoas(\Fstar)$. Since $G^*$ has maximum spectral radius and
$\Fstar$ is $M_t$-free, this proves the sharp bound in part~\textup{(i)} for
$\alpha>0$. The case $\alpha=0$ was proved at the beginning.

Finally, we prove part~\textup{(ii)} for every $0\leq\alpha\leq1$ and
$n\geq2t$. If $t<s\leq2t-1$, then
$t-1\leq s-2$, so every graph
$F^{t,\ell}(n)$ satisfies~\eqref{eq:small-core-bound}. Consequently,
$\rhoas(G^*)\leq\rhoas(\Kstar)$ for every $n\geq2t$. Since $G^*$ has maximum
spectral radius and $\Kstar$ is $M_t$-free, this proves the sharp bound in
part~\textup{(ii)}.
\end{proof}

The preceding reduction by compression is valid for every
$0\leq\alpha\leq1$. Indeed, let
$G$ be any $n$-vertex $M_t$-free graph with $n\geq2t$, choose a nonnegative
vector $x$ which is an optimal solution of the maximization
problem in~\eqref{eq:variational}, and relabel the vertices so that
$x_1\geq\cdots\geq x_n$. Repeated application
of~\eqref{eq:compression-poly} with this fixed vector yields
\[
 P_{\alpha,s}(H,x)\geq P_{\alpha,s}(G,x)=\rhoas(G)
\]
for the terminal left-compressed graph $H$, and hence
$\rhoas(G)\leq\rhoas(H)$. Lemma~\ref{lem:matching-free} preserves
$M_t$-freeness, the decreasing potential guarantees termination,
Lemma~\ref{lem:shifted-matching} gives
$H\subseteq F^{t,\ell}(n)$ for some $0\leq\ell\leq t-1$, and
monotonicity~\eqref{eq:monotone} yields
\[
 \rhoas(G)\leq\rhoas(F^{t,\ell}(n)).
\]
No step in this reduction requires $\alpha$ to lie in the interior of
$[0,1]$. The assumption
$\alpha>0$ is used only in the subsequent
asymptotic comparison, through $A_\ell>0$ and~\eqref{eq:lowerF}.

When $s=t\geq3$, every $0\leq\ell\leq t-2$ satisfies $\ell\leq s-2$, so
inequality~\eqref{eq:small-core-bound} gives
$\rhoas(F^{t,\ell}(n))\leq\rhoas(\Kstar)$. This yields the following bound for
$n\geq2t-1$ and $0\leq\alpha\leq1$.

\begin{proposition}\label{prop:maxform}
Let $s=t\geq3$, $0\leq\alpha\leq1$, and let $G$ be an $n$-vertex
$M_t$-free graph with $n\geq2t-1$. Then
\[
 \rhoas(G)\leq\max\bigl\{\rhoas(\Kstar),\rhoas(\Fstar)\bigr\}
 =\max\left\{\tbinom{2t-2}{t-1},\,\rhoas(\Fstar)\right\}.
\]
Both $\Kstar$ and $\Fstar$ are $M_t$-free, so this maximum is attained.
\end{proposition}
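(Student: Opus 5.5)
The plan is to split according to whether $n=2t-1$ or $n\geq2t$, and in the latter case to reuse the compression reduction described just before the statement.

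\emph{Case $n=2t-1$.} Every $n$-vertex graph is a subgraph of $K_{2t-1}=\Kstar$. By monotonicity~\eqref{eq:monotone},
\[
 \rhoas(G)\leq\rhoas(K_{2t-1})=\binom{2t-2}{s-1}=\binom{2t-2}{t-1}.
\]
Here the value of $\rhoas(K_p)$ is the row-sum/all-ones computation established in the proof of Theorem~\ref{thm:matching}. This already gives the claimed bound.

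\emph{Case $n\geq2t$.} Choose a nonnegative optimal vector $x$ for $P_{\alpha,s}(G,\cdot)$ in~\eqref{eq:variational}, and order the vertices so that $x_1\geq\cdots\geq x_n$.
\begin{enumerate}
\item Apply compressions $\pi_{ij}$ with $i<j$ repeatedly, keeping the fixed vector $x$.
\item By Lemma~\ref{lem:compression}, the value $P_{\alpha,s}(\cdot,x)$ never decreases. By Lemma~\ref{lem:matching-free}, each graph stays $M_t$-free. The potential $\sum_{uv\in E}(u+v)$ strictly decreases, so the process terminates at a left-compressed graph $H$ with $\rhoas(G)\leq P_{\alpha,s}(H,x)\leq\rhoas(H)$.
\item Lemma~\ref{lem:shifted-matching} gives $H\subseteq F^{t,\ell}(n)$ for some $0\leq\ell\leq t-1$, and~\eqref{eq:monotone} gives $\rhoas(G)\leq\rhoas(F^{t,\ell}(n))$.
\end{enumerate}
None of these steps uses $\alpha\in(0,1)$ or a largeness assumption on $n$, so they are valid for all $0\leq\alpha\leq1$.

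\emph{Comparing the graphs $F^{t,\ell}(n)$.} Since $s=t$, every $\ell\leq t-2$ satisfies $\ell\leq s-2$. Then $|S|=\ell<s-1$, so no $s$-clique meets $T$, and~\eqref{eq:small-core-bound} gives
\[
 \rhoas(F^{t,\ell}(n))=\rhoas(K_{2t-1-\ell})\leq\rhoas(K_{2t-1}).
\]
The only remaining possibility is $\ell=t-1$, where $F^{t,t-1}(n)=\Fstar$. Hence $\rhoas(G)\leq\max\{\rhoas(\Kstar),\rhoas(\Fstar)\}$.

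The inequality $\rhoas(K_{2t-1-\ell})\leq\rhoas(K_{2t-1})$ holds because the function $p\mapsto\binom{p-1}{s-1}$ is nondecreasing in $p$, or alternatively by~\eqref{eq:monotone} after padding $K_{2t-1-\ell}$ with isolated vertices.

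\emph{Attainment.} Both $\Kstar$ and $\Fstar$ are $M_t$-free by the matching count given in the Introduction, so the maximum is attained.

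The main point requiring care is that the reduction must not silently use extremality of $G$ or large $n$. Working with an arbitrary $G$ and a fixed optimal $x$, rather than with an extremal $G^*$, avoids needing $x$ to remain optimal for the intermediate graphs: only the monotonicity of $P_{\alpha,s}(\cdot,x)$ along the compression chain is needed.
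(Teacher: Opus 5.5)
Your proposal is correct and follows the paper's route: you handle $n=2t-1$ by monotonicity, and for $n\geq2t$ you use the compression reduction with a fixed optimal vector for an arbitrary $G$ (the paragraph preceding the proposition), then \eqref{eq:small-core-bound} for every $\ell\leq t-2$, with $\ell=t-1$ giving $\Fstar$. Your closing remark also matches the paper, which notes that this reduction needs neither extremality of $G$ nor large $n$.
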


\begin{proof}
For $n=2t-1$, monotonicity~\eqref{eq:monotone} gives
$\rhoas(G)\leq\rhoas(\Kstar)$. For $n\geq2t$, the preceding reduction gives
\[
 \rhoas(G)\leq
 \max_{0\leq\ell\leq t-1}\rhoas(F^{t,\ell}(n)).
\]
Since $s=t\geq3$, inequality~\eqref{eq:small-core-bound} yields
$\rhoas(F^{t,\ell}(n))\leq\rhoas(\Kstar)$ for every $\ell\leq t-2$. For the
remaining value $\ell=t-1$, the corresponding term is $\rhoas(\Fstar)$. The
result follows.
\end{proof}

Which of the two graphs attains the maximum in
Proposition~\ref{prop:maxform} is decided by comparing the constant
$\binom{2t-2}{t-1}$ with $\rhoas(\Fstar)$. For $0\leq\alpha<1$, the latter can
be computed from the two-coordinate eigenvalue equations in
Remark~\ref{rem:perron}; at
$\alpha=1$, it equals $\Delta_s(\Fstar)$, as described in
Proposition~\ref{prop:alpha-one}.

\begin{remark}\label{rem:yupeng-improve}
When $\alpha=0$, we have $\rho_{0,s}=\rho_s$, so
Proposition~\ref{prop:maxform} reads
\[
 \rho_s(G)\leq\max\bigl\{\rho_s(\Kstar),\rho_s(\Fstar)\bigr\}
 \qquad(s=t\geq3,\ n\geq2t-1).
\]
Thus, for $s=t\geq3$, Proposition~\ref{prop:maxform} removes the hypothesis
in~\cite[Theorem~1.8]{YuPeng} that $n$ be sufficiently large. For $3\leq s<t$,
it appears to remain open whether the same inequality holds for every
$n\geq2t-1$ at $\alpha=0$ (see Problem~\ref{problem:exact-range}).
\end{remark}

When $\alpha=1$, the tensor $\Aas(G)$ is diagonal and
$\rho_{1,s}(G)=\Delta_s(G)$. Applying the same reduction with $\alpha=1$
therefore reduces
the problem to maximizing the $s$-clique degree and gives the following
explicit maximum of finitely many linear functions of $n$.

\begin{proposition}\label{prop:alpha-one}
Let $t\geq2$, $s\geq3$, and let $G$ be an $n$-vertex $M_t$-free graph with
$n\geq2t-1$. Then
\[
 \Delta_s(G)\leq
 \max\left\{\binom{2t-2}{s-1},\
 \max_{s-1\leq\ell\leq t-1}\bigl(na_\ell+c_\ell\bigr)\right\},
\]
where $a_\ell=\binom{\ell-1}{s-2}$ and
$c_\ell=\binom{2t-2-\ell}{s-1}-(2t-1-\ell)a_\ell$. The second maximum is over
an empty index set, and so is omitted, when $s>t$; the bound is attained by one
of the graphs $F^{t,\ell}(n)$.
\end{proposition}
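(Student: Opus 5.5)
We prove Proposition~\ref{prop:alpha-one} by running the compression reduction above with $\alpha=1$ and then bounding the maximum $s$-clique degree of each graph $F^{t,\ell}(n)$ directly. At $\alpha=1$ the quantity $\rho_{1,s}(G)$ equals $\Delta_s(G)$, since $\mathcal A_{1,s}(G)=\Ds(G)$ is diagonal. No eigenvector analysis is needed.

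\emph{Reduction.} For $n=2t-1$ we have $G\subseteq K_{2t-1}$, so monotonicity~\eqref{eq:monotone} gives $\Delta_s(G)\le\binom{2t-2}{s-1}$. For $n\ge2t$ we use the reduction stated after the proof of Theorem~\ref{thm:matching}. It is explicitly valid at $\alpha=1$: take an optimal $x$, order the vertices by its coordinates, and compress using Lemma~\ref{lem:compression}, Lemma~\ref{lem:matching-free} and the decreasing potential. Lemma~\ref{lem:shifted-matching} together with~\eqref{eq:monotone} then gives
\[
 \Delta_s(G)\le\max_{0\le\ell\le t-1}\Delta_s\bigl(F^{t,\ell}(n)\bigr).
\]
A more elementary route also works. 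Take $x=e_v$ for a vertex $v$ of maximum $s$-clique degree and relabel so that $v=1$ while the other coordinates are zero. Each compression $\pi_{ij}$ with $i<j$ respects the ordering of $x$, so $d_s(1)$ does not decrease.

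\emph{Computing $\Delta_s(F^{t,\ell}(n))$.} There are two cases.
\begin{itemize}
\item[(a)] If $\ell\le s-2$, then~\eqref{eq:small-core-bound} gives $\Delta_s(F^{t,\ell}(n))=\Delta_s(K_{p_\ell})\le\binom{2t-2}{s-1}$.
\item[(b)] If $s-1\le\ell\le t-1$, the degree computation in the proof of Theorem~\ref{thm:matching} applies for $n\ge2t$. It uses the decomposition~\eqref{eq:clique-decomposition} and the comparisons between vertices of $S$, of $B\setminus S$ and of $T$, and gives $\Delta_s(F^{t,\ell}(n))=na_\ell+c_\ell$.
\end{itemize}
Combining (a) and (b) yields the stated bound. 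When $s>t$ we have $t-1<s-1$, so case (b) never occurs and the inner maximum is omitted.

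\emph{Attainment.} If the maximum equals $\binom{2t-2}{s-1}$, it is attained by $\Kstar=F^{t,0}(n)$. If it equals $na_\ell+c_\ell$ for some $\ell$ with $s-1\le\ell\le t-1$ and $n\ge2t$, it is attained by $F^{t,\ell}(n)$, which is $M_t$-free as shown in the Introduction. The one remaining case is $n=2t-1$ with the maximum given by $na_\ell+c_\ell$. There $N_\ell=\ell$, and a direct count shows that $na_\ell+c_\ell$ is the degree in $F^{t,\ell}(2t-1)$ of a vertex in $S$. This degree is at most $\binom{2t-2}{s-1}$, since $F^{t,\ell}(2t-1)\subseteq K_{2t-1}$, so the first term already dominates and $\Kstar$ attains the bound.

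The main obstacle is the boundary case $n=2t-1$. There the degree comparison between $S$ and $T$, which used $N_\ell\ge\ell+1$, no longer holds. We handle it through containment in $K_{2t-1}$, as above, instead of through that comparison.
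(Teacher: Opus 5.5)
Your proof is correct and follows the paper's argument essentially step for step. You handle $n=2t-1$ by containment in $K_{2t-1}$, use the compression reduction to $F^{t,\ell}(n)$ for $n\ge 2t$, and split into $\ell\le s-2$ (small-core bound) and $s-1\le\ell\le t-1$ (degree formula); your $x=e_v$ variant is just a particular optimal vector at $\alpha=1$.

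One small correction: the $S$-versus-$T$ degree comparison does not actually fail at $n=2t-1$. It needs only $N_\ell\ge\ell$, since $\ell a_\ell=(s-1)\binom{\ell}{s-1}\ge\binom{\ell}{s-1}$; this is what the paper checks, although your argument never uses it.
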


\begin{proof}
For $n=2t-1$, monotonicity~\eqref{eq:monotone} gives
$\Delta_s(G)\leq\binom{2t-2}{s-1}$. For $s-1\leq\ell\leq t-1$, the degree
formula remains valid when $n=2t-1$: indeed, $N_\ell=\ell$ and
$\ell a_\ell=(s-1)\binom{\ell}{s-1}\geq\binom{\ell}{s-1}$, so a vertex in
$S$ still has maximum $s$-clique degree. Since
$F^{t,\ell}(2t-1)\subseteq K_{2t-1}$, we have
$(2t-1)a_\ell+c_\ell\leq\binom{2t-2}{s-1}$. Thus the displayed maximum equals
its first term when $n=2t-1$ and is attained by
$F^{t,0}(2t-1)=K_{2t-1}$. We may therefore assume $n\geq2t$. Compression and
Lemma~\ref{lem:shifted-matching} give
\[
 \Delta_s(G)\leq
 \max_{0\leq\ell\leq t-1}\Delta_s(F^{t,\ell}(n)).
\]
For $\ell\leq s-2$, inequality~\eqref{eq:small-core-bound} gives
$\Delta_s(F^{t,\ell}(n))\leq\binom{2t-2}{s-1}$, whereas for
$s-1\leq\ell\leq t-1$ the degree formula gives
$\Delta_s(F^{t,\ell}(n))=na_\ell+c_\ell$. Taking the maximum proves the
bound.
\end{proof}

\begin{remark}\label{rem:n0}
Assume $0<\alpha<1$. The proof gives the following explicit choice of $n_0$
when $3\leq s<t$.
For each $\ell\in\{s-1,\ldots,t-2\}$, let $y_\ell$ be the unique positive
root of
\[
 A_\ell y^s-(1-\alpha)D_\ell y^{s-1}-C_\ell=0.
\]
Then one may take
\[
 n_0=\max\left\{2t,\
 1+\left\lfloor
 \frac{\binom{2t-2}{s-1}/\alpha-c_{t-1}}{a_{t-1}}
 \right\rfloor,\
 1+\left\lfloor
 \max_{s-1\leq\ell\leq t-2}y_\ell^s\right\rfloor\right\}.
\]
Thus $n_0$ is obtained by solving finitely many equations in one variable. Put
$f_\ell(y)=A_\ell y^s-(1-\alpha)D_\ell y^{s-1}-C_\ell$. Its coefficients have
at most one sign change, so Descartes' rule gives at most one positive root. If
$C_\ell>0$, then $f_\ell(0)<0$ and $f_\ell(y)\to\infty$, proving existence by
continuity; if $C_\ell=0$, then
$f_\ell(y)=y^{s-1}(A_\ell y-(1-\alpha)D_\ell)$ and again has exactly one
positive root. Thus $y_\ell$ exists and is unique. Put
$B_\ell=(1-\alpha)D_\ell>0$. At the positive root $y_\ell$,
\[
 A_\ell y_\ell-B_\ell=\frac{C_\ell}{y_\ell^{s-1}}\geq0.
\]
Hence, for every $y\geq y_\ell$,
\[
 f_\ell'(y)
 =y^{s-2}\bigl(sA_\ell y-(s-1)B_\ell\bigr)
 \geq y^{s-2}\left(B_\ell+
 \frac{sC_\ell}{y_\ell^{s-1}}\right)>0.
\]
Thus $f_\ell$ is strictly increasing on $[y_\ell,\infty)$. The lower bound for
the spectral-radius difference is $f_\ell(n^{1/s})$, and
$n\geq1+\lfloor y_\ell^s\rfloor$ implies $n^{1/s}>y_\ell$. Hence this lower
bound is strictly positive for $n\geq n_0$, that is,
$\rhoas(\Fstar)-\rhoas(F^{t,\ell}(n))>0$ for every
$s-1\leq\ell\leq t-2$. For $s=t\geq3$, still assuming $0<\alpha<1$, the same
comparison with the constant $\binom{2t-2}{t-1}$ shows that one may take
\[
 n_0=\max\left\{2t,\
 1+\left\lfloor
 \frac{\binom{2t-2}{t-1}/\alpha-c_{t-1}}{a_{t-1}}
 \right\rfloor\right\}.
\]
This bound degenerates as $\alpha\to0^+$. For sufficiently large $n$,
the case $\alpha=0$ in part~\textup{(i)} is instead covered by the result
of Yu and Peng~\cite{YuPeng} recalled in the Introduction. The second term in
the displayed maximum defining $n_0$ cannot be omitted: it enforces the
comparison with $K_{2t-1}$, and as $\alpha\to1^-$ the quantity
inside the floor tends to
$\bigl(\binom{2t-2}{s-1}-c_{t-1}\bigr)/a_{t-1}$. This is the value of $n$ for
which $\Delta_s(\Fstar)=na_{t-1}+c_{t-1}$ equals
$\Delta_s(\Kstar)=\binom{2t-2}{s-1}$, i.e. the point at which the two
graphs are both extremal in the case $\alpha=1$.
\end{remark}

\begin{problem}\label{problem:exact-range}
For $0\leq\alpha<1$, determine the least integer $n_0(t,s,\alpha)$ for which
Theorem~\ref{thm:matching}\textup{(i)} holds. More generally, for the same
range of $\alpha$, it is natural
to ask whether
\begin{equation}\label{eq:maxform}
 \rhoas(G)\leq\max\{\rhoas(\Kstar),\rhoas(\Fstar)\}
\end{equation}
holds for \emph{every} $n\geq2t-1$ when $3\leq s\leq t$.
Proposition~\ref{prop:maxform} settles the case $s=t\geq3$, so only
$3\leq s<t$ remains open. In this range, the proof of
Theorem~\ref{thm:matching} already gives
$\rhoas(G)\leq\max_{0\leq\ell\leq t-1}\rhoas(F^{t,\ell}(n))$ for every
$n\geq2t$, and~\eqref{eq:small-core-bound} handles every $\ell\leq s-2$.
Since every $F^{t,\ell}(n)$ is itself $M_t$-free,
inequality~\eqref{eq:maxform} is equivalent to
\[
 \rhoas(F^{t,\ell}(n))\leq\max\{\rhoas(\Kstar),\rhoas(\Fstar)\}
\]
for $s-1\leq\ell\leq t-2$ and $2t\leq n<n_0(t,s,\alpha)$.
Thus it remains only to compare the finitely many pairs $(n,\ell)$ satisfying
these two ranges. When $\alpha=0$ and $n$ is sufficiently large,
inequality~\eqref{eq:maxform} is precisely the conclusion
of~\cite[Theorem~1.8]{YuPeng}; it appears to remain open whether the inequality
holds for every
$n\geq2t-1$.
\end{problem}

\begin{remark}\label{rem:perron}
Assume $3\leq s\leq t$, $0\leq\alpha<1$, and $n\geq2t-1$. Let $C$ and $I$
denote the clique and independent parts of $\Fstar$. For every $u\in C$ and
$w\in I$, one can choose $s-2$ vertices from $C\setminus\{u\}$, since
$t-2\geq s-2$; together with $u$ and $w$, they form an $s$-clique. Hence the
2-section of the $s$-uniform clique hypergraph contains all edges between
$C$ and $I$ and is connected. Since $1-\alpha>0$, $\Aas(\Fstar)$ and
$\As(\Fstar)$ have the same off-diagonal support, so $\Aas(\Fstar)$ is weakly
irreducible. Its normalized positive
Perron vector is therefore unique~\cite{Qi2013}. Automorphisms permuting
vertices within either part leave the tensor invariant, so uniqueness implies
that this vector has a common coordinate $a$ in the clique part and a common
coordinate $b$ in the independent part. Let $r=t-1$ and
$m=n-t+1$, so $ra^s+mb^s=1$. Writing
\[
 d_C=\binom{t-2}{s-1}+m\binom{t-2}{s-2},
 \qquad d_I=\binom{t-1}{s-1},
\]
the value $\rho=\rhoas(\Fstar)$ is determined by
\begin{align*}
 \rho a^{s-1}
 &=\alpha d_C a^{s-1}+(1-\alpha)\left\{
   \binom{t-2}{s-1}a^{s-1}
   +m\binom{t-2}{s-2}a^{s-2}b\right\},\\
 \rho b^{s-1}
 &=\alpha d_I b^{s-1}+(1-\alpha)
   \binom{t-1}{s-1}a^{s-1}.
\end{align*}
The case $\alpha=1$ is excluded because there $\Aas(\Fstar)=\Ds(\Fstar)$
is diagonal and not weakly irreducible. Moreover,
\[
 d_C-d_I=(m-1)\binom{t-2}{s-2}>0,
\]
so the optimal solutions of the maximization problem in~\eqref{eq:variational} for
$\Fstar$ are precisely the nonnegative vectors supported on the clique part. Thus uniqueness and
positivity on all vertices fail.
\end{remark}

\section{Concluding remarks}\label{sec:conclusion}

For every $0\leq\alpha\leq1$, we proved an $\alpha$-$s$-clique spectral
analogue of the generalized
Erd\H{o}s--Gallai theorem by combining graph compression with the variational
characterization of the tensor spectral radius in the range
$3\leq s\leq2t-1$. For $3\leq s\leq t$ and
sufficiently large $n$, the bound is attained by $\Fstar$; for
$t<s\leq2t-1$ and $n\geq2t-1$, it is attained by $\Kstar$. When
$s=t\geq3$,
Proposition~\ref{prop:maxform} determines the maximum for every $n\geq2t-1$
and $0\leq\alpha\leq1$. In particular, at $\alpha=0$ this removes the
requirement that $n$ be sufficiently large from~\cite[Theorem~1.8]{YuPeng}.
Proposition~\ref{prop:alpha-one} also determines the maximum when $\alpha=1$
for all $s\geq3$.

The main remaining problem is the range $3\leq s<t$ and
$0\leq\alpha<1$: determine the least integer $n_0(t,s,\alpha)$ for which
Theorem~\ref{thm:matching}\textup{(i)} holds and decide
whether~\eqref{eq:maxform} holds for
every $n\geq2t-1$; see Problem~\ref{problem:exact-range}. A complete
characterization of all $M_t$-free graphs attaining the bounds in
Theorem~\ref{thm:matching} and Propositions~\ref{prop:maxform}
and~\ref{prop:alpha-one} also remains open.

\section*{Declaration of competing interest}

The authors declare that they have no known competing financial interests or
personal relationships that could have appeared to influence the work
reported in this paper.

\section*{Declaration of generative AI and AI-assisted technologies
in the manuscript preparation process}

During the preparation of this work, the author(s) used GLM-5.3 and DeepSeek 4
for grammar checking, language polishing, computational verification of
combinatorial identities and numerical values, and validation of preliminary
research ideas. The author(s) reviewed and edited the output as needed and
take full responsibility for the content of the published article.

\begingroup
\small
\setlength{\bibsep}{1pt}
\bibliographystyle{abbrvnat} 
\bibliography{alpha_clique_tensor_v8}

\begin{thebibliography}{24}
\providecommand{\natexlab}[1]{#1}
\providecommand{\url}[1]{\texttt{#1}}
\expandafter\ifx\csname urlstyle\endcsname\relax
  \providecommand{\doi}[1]{doi: #1}\else
  \providecommand{\doi}{doi: \begingroup \urlstyle{rm}\Url}\fi

\bibitem[Alon and Shikhelman(2016)]{AlonShikhelman}
N.~Alon and C.~Shikhelman.
\newblock Many {$T$} copies in {$H$}-free graphs.
\newblock \emph{J. Combin. Theory Ser. B}, 121:\penalty0 146--172, 2016.
\newblock \doi{10.1016/j.jctb.2016.03.004}.

\bibitem[Chang et~al.(2008)Chang, Pearson, and Zhang]{ChangPearsonZhang}
K.~C. Chang, K.~Pearson, and T.~Zhang.
\newblock Perron-{F}robenius theorem for nonnegative tensors.
\newblock \emph{Commun. Math. Sci.}, 6\penalty0 (2):\penalty0 507--520, 2008.
\newblock \doi{10.4310/cms.2008.v6.n2.a12}.

\bibitem[Chen and Huang(2022)]{ChenHuang}
Q.~Chen and Q.~Huang.
\newblock The maximal {$A_{\alpha}$}-spectral radius of graphs with given
  matching number.
\newblock \emph{Linear Multilinear Algebra}, 70\penalty0 (20):\penalty0
  5193--5206, 2022.
\newblock \doi{10.1080/03081087.2021.1910120}.

\bibitem[Cooper and Dutle(2012)]{CooperDutle}
J.~Cooper and A.~Dutle.
\newblock Spectra of uniform hypergraphs.
\newblock \emph{Linear Algebra Appl.}, 436\penalty0 (9):\penalty0 3268--3292,
  2012.
\newblock \doi{10.1016/j.laa.2011.11.018}.

\bibitem[Erd{\H{o}}s(1962)]{ErdosCliques}
P.~Erd{\H{o}}s.
\newblock On the number of complete subgraphs contained in certain graphs.
\newblock \emph{Magyar Tud. Akad. Mat. Kutat\'o{} Int. K\"ozl.}, 7:\penalty0
  459--464, 1962.

\bibitem[Erd{\H{o}}s and Gallai(1959)]{ErdosGallai}
P.~Erd{\H{o}}s and T.~Gallai.
\newblock On maximal paths and circuits of graphs.
\newblock \emph{Acta Math. Acad. Sci. Hungar.}, 10:\penalty0 337--356, 1959.
\newblock \doi{10.1007/BF02024498}.

\bibitem[Feng et~al.(2007)Feng, Yu, and Zhang]{FengYuZhang}
L.~Feng, G.~Yu, and X.-D. Zhang.
\newblock Spectral radius of graphs with given matching number.
\newblock \emph{Linear Algebra Appl.}, 422\penalty0 (1):\penalty0 133--138,
  2007.
\newblock \doi{10.1016/j.laa.2006.09.014}.

\bibitem[Guo and Zhou(2020)]{GuoZhou}
H.~Guo and B.~Zhou.
\newblock On the {$\alpha$}-spectral radius of uniform hypergraphs.
\newblock \emph{Discuss. Math. Graph Theory}, 40\penalty0 (2):\penalty0
  559--575, 2020.
\newblock \doi{10.7151/dmgt.2268}.

\bibitem[Jiang et~al.(2018)Jiang, Peng, and Wu]{JiangPengWu}
T.~Jiang, Y.~Peng, and B.~Wu.
\newblock Lagrangian densities of some sparse hypergraphs and {T}ur\'an numbers
  of their extensions.
\newblock \emph{European J. Combin.}, 73:\penalty0 20--36, 2018.
\newblock \doi{10.1016/j.ejc.2018.05.001}.

\bibitem[Lim(2005)]{Lim2005}
L.-H. Lim.
\newblock Singular values and eigenvalues of tensors: a variational approach.
\newblock In \emph{1st IEEE International Workshop on Computational Advances in
  Multi-Sensor Adaptive Processing, 2005.}, pages 129--132, 2005.
\newblock \doi{10.1109/CAMAP.2005.1574201}.

\bibitem[Lin et~al.(2020)Lin, Guo, and Zhou]{LinGuoZhou}
H.~Lin, H.~Guo, and B.~Zhou.
\newblock On the {$\alpha$}-spectral radius of irregular uniform hypergraphs.
\newblock \emph{Linear Multilinear Algebra}, 68\penalty0 (2):\penalty0
  265--277, 2020.
\newblock \doi{10.1080/03081087.2018.1502253}.

\bibitem[Liu and Bu(2023)]{LiuBu}
C.~Liu and C.~Bu.
\newblock On a generalization of the spectral {M}antel's theorem.
\newblock \emph{J. Comb. Optim.}, 46\penalty0 (2):\penalty0 Paper No. 14, 10,
  2023.
\newblock \doi{10.1007/s10878-023-01081-y}.

\bibitem[Nikiforov(2007)]{Nikiforov2007}
V.~Nikiforov.
\newblock Bounds on graph eigenvalues. {II}.
\newblock \emph{Linear Algebra Appl.}, 427\penalty0 (2-3):\penalty0 183--189,
  2007.
\newblock \doi{10.1016/j.laa.2007.07.010}.

\bibitem[Nikiforov(2017)]{Nikiforov}
V.~Nikiforov.
\newblock Merging the {$A$}- and {$Q$}-spectral theories.
\newblock \emph{Appl. Anal. Discrete Math.}, 11\penalty0 (1):\penalty0 81--107,
  2017.
\newblock \doi{10.2298/AADM1701081N}.

\bibitem[Nosal(1970)]{Nosal}
E.~Nosal.
\newblock Eigenvalues of graphs.
\newblock Master's thesis, University of Calgary, 1970.

\bibitem[Qi(2005)]{Qi2005}
L.~Qi.
\newblock Eigenvalues of a real supersymmetric tensor.
\newblock \emph{J. Symbolic Comput.}, 40\penalty0 (6):\penalty0 1302--1324,
  2005.
\newblock \doi{10.1016/j.jsc.2005.05.007}.

\bibitem[Qi(2013)]{Qi2013}
L.~Qi.
\newblock Symmetric nonnegative tensors and copositive tensors.
\newblock \emph{Linear Algebra Appl.}, 439\penalty0 (1):\penalty0 228--238,
  2013.
\newblock \doi{10.1016/j.laa.2013.03.015}.

\bibitem[Tur\'an(1941)]{Turan}
P.~Tur\'an.
\newblock On an extremal problem in graph theory.
\newblock \emph{Mat. Fiz. Lapok}, 48:\penalty0 436--452, 1941.

\bibitem[Wang(2020)]{Wang}
J.~Wang.
\newblock The shifting method and generalized {T}ur\'an number of matchings.
\newblock \emph{European J. Combin.}, 85:\penalty0 103057, 7, 2020.
\newblock \doi{10.1016/j.ejc.2019.103057}.

\bibitem[Yan et~al.(2026)Yan, Yang, and Peng]{YanYangPeng}
Z.~Yan, B.~Yang, and Y.~Peng.
\newblock A spectral generalized {A}lon-{F}rankl theorem.
\newblock \emph{Discrete Math.}, 349\penalty0 (2):\penalty0 Paper No. 114785,
  12, 2026.
\newblock \doi{10.1016/j.disc.2025.114785}.

\bibitem[Yu and Peng(2025)]{YuPengZykov}
L.~Yu and Y.~Peng.
\newblock A spectral version of the theorem of {Zykov} and {Erd{\H{o}}s}.
\newblock \emph{Electron. J. Combin.}, 32\penalty0 (4):\penalty0 Paper No.
  4.15, 11, 2025.
\newblock \doi{10.37236/14119}.

\bibitem[Yu and Peng(2026)]{YuPeng}
L.~Yu and Y.~Peng.
\newblock A spectral generalized {Erd{\H{o}}s--Gallai} theorem.
\newblock \emph{Discrete Math.}, 349\penalty0 (1):\penalty0 Paper No. 114672,
  7, 2026.
\newblock \doi{10.1016/j.disc.2025.114672}.

\bibitem[Yuan and Shao(2023)]{YuanShao}
X.~Yuan and Z.~Shao.
\newblock On the maximal {$\alpha $}-spectral radius of graphs with given
  matching number.
\newblock \emph{Linear Multilinear Algebra}, 71\penalty0 (10):\penalty0
  1681--1690, 2023.
\newblock \doi{10.1080/03081087.2022.2071412}.

\bibitem[Zykov(1949)]{Zykov}
A.~A. Zykov.
\newblock On some properties of linear complexes.
\newblock \emph{Mat. Sbornik N.S.}, 24/66:\penalty0 163--188, 1949.

\end{thebibliography}
\endgroup

\end{document}